\documentclass[11pt]{article}
\usepackage{amssymb}
\usepackage{amsthm}
\usepackage{amsmath,amsfonts,amssymb}
\usepackage{graphicx}
\usepackage{algpseudocode}
\usepackage{algorithm}
\usepackage{natbib}

\addtolength{\textwidth}{3cm}
\addtolength{\oddsidemargin}{-1.5cm}
\addtolength{\textheight}{4cm}
\addtolength{\topmargin}{-2cm}

\newtheorem{theorem}{Theorem}[section]

\newtheorem{definition}[theorem]{Definition}
\newtheorem{remark}[theorem]{Remark}



\def\sqr#1#2{\vbox{\hrule height .#2pt
\hbox{\vrule width .#2pt height #1pt \kern #1pt
\vrule width .#2pt}\hrule height .#2pt }}

\def\begi{\begin{itemize}}
\def\endi{\end{itemize}}
\def\bega{\begin{array}}
\def\enda{\end{array}}

\def\forall{\hbox{for every~ }}

\def\forall{\hbox{for all}~}

\def\bel{\begin{equation}\label}
\def\eeq{\end{equation}}

\begin{document}

\title{Dynamic Congestion and Tolls with Mobile Source Emission}
\author{Ke Han$^{a}\thanks{e-mail: kxh323@psu.edu;}$
\qquad Terry L. Friesz$^{b}\thanks{e-mail: tfriesz@psu.edu;}$
\qquad Hongcheng Liu$^{b}\thanks{e-mail: hql5143@psu.edu;}$
\qquad Tao Yao$^{b}\thanks{e-mail: tyy1@engr.psu.edu;}$ \\\\
$^{a}$\textit{Department of  Mathematics}\\
\textit{Pennsylvania State University, PA 16802, USA}\\
$^{b}$\textit{Department of  Industrial and Manufacturing Engineering,}\\
\textit{Pennsylvania State University, PA 16802, USA}}

\date{}
\maketitle

\begin{abstract}
This paper proposes a dynamic congestion pricing model that takes into account mobile source emissions. We consider a tollable vehicular network where the users selfishly minimize their own travel costs, including travel time, early/late arrival penalties and tolls. On top of that, we assume that part of the network can be tolled by a central authority, whose objective is to minimize both total travel costs of road users and total emission on a network-wide level. The model is formulated as a {\it mathematical program with equilibrium constraints} (MPEC) problem and then reformulated as a {\it mathematical program with complementarity constraints} (MPCC).  The MPCC is solved using a quadratic penalty-based gradient projection algorithm.  A numerical study on a toy network illustrates the effectiveness of the tolling strategy and reveals a Braess-type paradox in the context of traffic-derived emission. 
\end{abstract}

\section{\label{Intro}Introduction}

According to the US  Environmental Protection Agency (2006), in 2003, the transportation sector contributed to 27 percent of total U.S. greenhouse gas (GHG) emissions. This number is expected to grow rapidly with an estimated increase of transportation energy use by 48 percent by 2015. Future transportation service network designs ought to take into account environmental issues.

In this paper, we propose a dynamic second-best congestion toll problem with embedded emission model for the management and control of tollable vehicular networks. We assume that users of a given network are selfishly minimizing their own disutility, which consists of travel delay,  early/late arrival penalties as well as the price of tolls.  On top of that, there exists a central authority that undertakes the role of the Stackelberg leader, whose objective includes two different aspects: the network efficiency and the environmental well-being in the presence of vehicle-driven emission.

The upper-level decision variable for the central authority (Stackelberg leader) is a dynamic congestion toll imposed on certain links of the network; while the lower-level decision variables for the travelers (Stackelberg follower) include route and departure time choices. The proposed congestion pricing problem with embedded emission model is formulated as a {\it mathematical programming with equilibrium constraints} (MPEC) problem, with multiple objectives including the mitigation of both congestion and traffic emission on a network-wide level.

To solve the multi-objective MPEC problem, we start by rewriting the {\it
differential variational inequality} (DVI) formulation of dynamic user equilibrium into a differential complementarity problem. Then, we employ a weighted-sum scalarization method to handle the multiple objectives. With these two steps, the multi-objective MPEC problem is transformed into a single-objective {\it mathematical program with complementarity constraints} (MPCC). To avoid the loss of constraint qualification, we relax the mathematical program by applying a quadratic penalty-based method. The relaxed problem is then solved with a  gradient projection method mentioned  in \cite{DODG}.

\subsection{Congestion toll pricing}
The idea of employing toll pricing to mitigate  congestion
arises from the congestion pricing strategy originally proposed by \cite{Pigou}. In the literature,
toll pricing problems can be classified into two categories: 1) first-best
toll pricing, which means every arc of the network is tollable; and 2) second-best toll
pricing, which assumes that only a subset of arcs is tolled for political
or other reasons. Examples of the first category include marginal social cost pricing
strategy \citep{AK}, and several other models and methodologies \citep{HR, Dial1999, Dial2000}. Regarding the second-best tolling strategy, \cite{LH} propose a {\it mathematical program with equilibrium constraints} (MPEC) approach to compute the optimal toll prices. All the aforementioned literature are restricted to the static case. For a comprehensive review on static road pricing problems, the reader is referred to \cite{YH}. By nature of these problems, only route choices of travelers are captured by the models.

In the past two decades, {\it dynamic traffic assignment} (DTA) models and dynamic congestion tolling problems have received increased attention due to their capability of capturing not only route choices but also  departure time choices of travelers.  Dynamic congestion pricing in the presence of traffic bottlenecks  are investigated in   \cite{ADL, AK, Braid, DL}. 
 \cite{Friesz2007} propose an MPEC problem and a solution approach to determine
the optimal second-best tolling strategy, using the {\it link delay model} (LDM) original introduced by \cite{Friesz1993}. \cite{Yao} further study a dynamic congestion pricing problem
in the presence of demand uncertainty. \cite{Wismans} employs the cell transmission model to study multi-objective congestion management problem. He uses a genetic algorithm and response surface methods for solving the MPEC problems.  A more complete review on existing dynamic congestion pricing models and solution approaches is presented in \cite{Yao}.

 In this article, we seek to explore the effectiveness of second-best tolling strategies in minimizing both traffic congestion and automobile-induced emissions. To this end, we propose
a multi-objective MPEC problem  to determine the optimal toll price. Such an MPEC model has a lower-level dynamic user equilibrium problem that employs the LWR-Lax model \citep{Friesz2013} for the {\it dynamic network loading} (DNL) subproblem. The contribution made by this paper is as follows. 
\begin{itemize}
\item We propose an approach of embedding emission models into the {\it dynamic network loading} (DNL) submodel of the dynamic user equilibrium problem. Such an approach is compatible with a variety of traffic flow models and emission models, which may capture vehicle spillback, and acceleration/deceleration. 
\item We propose to reformulate the dynamic MPEC model into a single-level optimal control problem using the equivalence between the DVI and the complementarity systems. The reformulation admits existing solution schemes.
\item A Braess-type paradox is reported in our numerical results which extends the classical Braess paradox \citep{Braess} to a dynamic case and to the context of environmental well-being. Such observation delivers further managerial insights to sustainable road network management. 
\end{itemize}

\subsection{Dynamic user equilibrium model} 
In this article, we employ the {\it simultaneous route-and-departure choice} (SRDC) dynamic user equilibrium model proposed by \cite{Friesz1993}. For the SRDC notion of DUE, unit travel cost, including early and late arrival penalties, is identical for those route and departure time choices selected by travelers between a given origin-destination pair. Such problem is articulated and formulated as a {\it variational inequality} (VI) in \cite{Friesz1993}. The DUE model typically consists of two major components: the mathematical notion of equilibrium among Nash agents, and the network performance model known as the  dynamic network loading (DNL) submodel. The DNL aims at describing and predicting temporal evolution of system
states by combining link dynamics and flow propagation constraints with link and path delay models.  Note that, by referring to the network loading procedure, we are neither
employing nor suggesting a sequential approach to the study and computation
of DUE. Rather, a subset of the equations and inequalities comprising a
complete DUE model may be grouped in a way that identifies a traffic
assignment subproblem and a network loading subproblem. Such a grouping and
choice of names is merely a matter of convenient language that avoids
repetitive reference to the same mathematical expressions. Use of such
language does not alter the need to solve both the assignment and loading
problems consistently and, thus, simultaneously.

\cite{Friesz2001} solve the differential variational inequality (DVI) formulation of DUE and the DNL subproblem simultaneously by formulating the arc dynamics, flow propagation constraints as a system of ordinary differential equations with state-dependent time lags. By doing so, they turn the DUE problem into a ``single-level" DVI problem that can be handled in the optimal control framework.  In addition, necessary conditions for optimal control problems with state-dependent time lags are derived therein. In \cite{FrieszMookherjee}, the theory of optimal control and the theory of infinite dimensional VIs are combined to create an implicit fixed point algorithm for calculating DUE.  \cite{Friesz2011} extend the time scale in which DUE problems are analyzed from within-day to day-to-day.  A dual-time scale DUE are articulated and solved as a result. \cite{Friesz2013} consider the Lighthill-Whitham-Richards model \citep{LW, Richards} for the DNL submodel.  The authors employ a variational method, known as Lax formula \citep{Lax, Evans}, derived for scalar conservation laws and Hamilton-Jacobi equations. In that paper, the DNL subproblem is formulated as a system of {\it differential algebraic equations} (DAEs), which can be efficiently solved for medium- and large-scale networks.

\subsection{Automobile emission models} 

Modeling approaches for automobile source emission  can be classified into three categories: microscopic, macroscopic and mesoscopic approaches. The microscopic emission models are relatively accurate: they characterize the emission rate on the level of a single vehicle, based on the physical attributes of the vehicle, driving behavior of the driver, as well as the surrounding environment.  It is assumed that the emission rate $e(t)$ of a moving vehicle is be expressed as a function of instantaneous velocity $v(t)$ and acceleration $a(t)$, 
\begin{equation}\label{micro}
e(t)~=~f_1\big(v(t),\,a(t)\big)
\end{equation}
such models can be easily calibrated and validated in a laboratorial environment.  There are several emission models based on the microscopic emission mechanism, such as \cite{Barth, Panis} and \cite{Rakha}. 
The drawback of the microscopic modeling approach is the lack of measurements associated with each individual car on the road. On the other hand, it is relatively easy to measure the traffic dynamics on a macroscopic level. The  macroscopic emission models (Ekstr\"om et al. 2004) express the average emission rate $\bar e(t)$ on a road segment as a function of the average density  $\bar\rho$ and average velocity $\bar v(t)$ in that same segment
\begin{equation}\label{macro}
\bar e(t)~=~f_2\big(\bar \rho(t),\,\bar v(t)\big)
\end{equation}
The drawback of the macroscopic modeling approaches for emission lies in the fact that the model is difficult to calibrate and validate, due to insufficient emission measurements on a road. The third type of emission models, the mesoscopic emission models, approximate individual vehicles' dynamics using macroscopic flow models and measurements. Then the macroscopic emission rate is aggregated among individual vehicles, while the emission rate of each individual vehicle is computed at a microscopic level. The mesoscopic models (Csik\'os et al. 2011, Csik\'os and Varga 2011,  Zegeye et al. 2010) take the modeling advantages of both macroscopic traffic flow models and microscopic emission models, avoiding the drawbacks of the previous two approaches.  However, combining a macroscopic traffic model which ignores granularity of microscopic quantities with an accurate microscopic emission models may introduce additional uncertainties to the model. Therefore, the mesoscopic models need to be carefully calibrated and validated using macroscopic traffic and emission measurements.

In this paper, the process of emission estimation is be embedded in the procedure of  dynamic network loading within the DUE problem. The DNL  procedure also provides a basis for the comparison of various microscopic and macroscopic emission functions, among which we distinguish between the two-argument functions $e(t)=f_1\big(v(t),\,a(t)\big)$ and the single-argument functions $e(t)=f_3\big(v(t)\big)$. 

The two-argument functions, such as the one proposed in the modal emission model  \citep{Barth}, apply a physical approach that matches the power demand  of a vehicle to various driving conditions including: low/high speed cruising, acceleration/deceleration, idling, and  stop-and-go, etc..  Such models are relatively accurate, and can be calibrated for different types of vehicles. However, it is relatively difficult to integrate the modal model into a macroscopic traffic flow model. In particular, the higher order traffic quantities such as acceleration/deceleration cannot be sufficiently captured by first-order models such as the Lighthill-Whitham-Richards conservation law model. We will have more to say about this in Section \ref{sectwoarg}. 

 On the other hand, the one-argument emission functions typically depends on the average speed. \cite{Rose} show that when traveling speed is under $80$ (km/hour), the relation between speed $v$ (in km/hour) and $HC/CO$ emissions $e_x$ (in pound/km) can be approximated by (for now and sequel,  $e_x$ denote the emission per unit distance).
\begin{equation}\label{rose}
e_x~=~b_1\,v^{-b_2}
\end{equation}
where $b_1,\,b_2$ are parameters depending on vehicle type and surrounding environment. \cite{KM} collected driving pattern data in Sydney and found that $NO_y$ emission $\tilde e_x$ can be modeled by
\begin{equation}\label{kentmudford}
\tilde e_x~=~\tilde b_1+{\tilde b_2\over v}
\end{equation}
According to the  Emission Factor Model 2000 \citep{emfac} by California Air Resources Board, constantly updated since 1988, the hot running emissions per unit distance 
\begin{equation}\label{emfac}
\hat e_x~=~\hbox{BER}\times \exp\left\{ \hat b_1(v-17.03)+\hat b_2(v-17.03)^2 \right\}
\end{equation}
where BER stands for basic emission rates, which are constants associated with $CO,\,NO_y,\,HC$. The unit of velocity is in mile/hour, the unit of $\hat e_x$ is in gram/mile.

\subsection{Solving MPEC problems}
The mathematical program with equilibrium constraints (MPEC), by its bi-level and non-convex nature,   often creates computational difficulties. A common approach to solve an MPEC problem is to reformulate the bi-level program into a {\it mathematical program with complementarity constraints} (MPCC), see \cite{Ban} and \cite{DODG}.  However, as noted in \cite{RM}  and in \cite{Ban}, the complementarity constraints might lose certain constraint qualifications. To resolve this issue, some regularization techniques are proposed in the literature.  \cite{RW}  study a relaxation approach, which is then applied by \cite{Ban}  to solve a continuous network design problem. \cite{Anitescu}  proposes an ${l}_1$-penalty approach and studies its impact on the convergence of an interior point algorithm; while \cite{MM} test a quadratic penalty function. According to their numerical results, quadratic penalty is a promising approach to handle complementarity constraints. However, all of the discussions above focus on  MPCC or MPEC in the context of finite dimensional programs. Regarding  continuous-time dynamic MPECs, numerical techniques were scarcely visited. Existing literature on continuous-time MPECs includes  the single-level reformulation proposed in \cite{Friesz2007}, the metaheuristic approach by  \cite{Yao} and a simultaneous discretization-based method in \cite{Raghunathan}

This paper utilizes the quadratic penalty method to solve the proposed dynamic MPEC problem. In particular, we will drop the complementarity constraints from the MPCC reformulation, and attach to the objective function a quadratic penalty function for the dropped constraints. The numerical results show general solvability and effectiveness of the proposed numerical method.

\subsection{Organization} 
The rest of this article is organized as follows. Section \ref{secDUE} recaps the dynamic user equilibrium model and its reformulation as variational inequality and differential variational inequality. We also present the dynamic network loading (DNL) submodel employed in this paper. In Section \ref{secemissionmodels}, two emission models are discussed in detail and embedded in the DNL subproblem. In Section \ref{secMultiobj} and Section \ref{secPPGM}, we  present the multi-objective MPECs, MPCCs and discuss solution methods based on a gradient projection method with quadratic penalty for the complementarity constraints. In Section \ref{secnumerical}, a sustainable congestion toll problem on a toy network is solved using techniques mentioned before. The optimal toll is meant to optimize two objectives simultaneously, under equilibrium flow. In particular, the numerical results demonstrate the effectiveness of our proposed methodology in reducing both emission and congestion levels. A Braess-type paradox is also observed in connection with these two objectives.

\section{Dynamic User Equilibrium}\label{secDUE}
In this section, we briefly review the DUE problem which serves as the lower-level component of our MPEC formulation.  The  DUE model is formulated as a variational inequality in \cite{Friesz1993} and then as a differential variational inequality in \cite{FrieszMookherjee}, then solved via a fixed-point algorithm in Hilbert space by \cite{Friesz2011}.

\subsection{The DUE formulation}

Let us consider a fixed planning horizon $[t_0,\, t_f]\subset \Re$. The most crucial ingredient of a dynamic user equilibrium model is the path delay operator, which provides travel delay along 
a path $p$ per unit of flow departing from the origin of that path; it is denoted by
\begin{equation}
D_{p}(t,h)\qquad \forall p\in \mathcal{P}
\end{equation}
where $\mathcal{P}$ is the set of paths employed by travelers, $t$
denotes departure time, and $h$ is a vector of departure rates. The path delay operators usually do not take on any closed form, instead they can only be evaluated numerically through the dynamic network loading (DNL) procedure. From these we construct effective unit path delay operators
\begin{equation}\label{psidef}
\Psi _{p}(t,h)=D_{p}(t,h)+F\left[ t+D_{p}(t,h)-T_{A}\right] \qquad   \forall p\in \mathcal{P}
\end{equation}%
where $T_{A}$ is the desired arrival time. We introduce the fixed trip matrix $\big(Q_{ij}: (i,\,j)\in\mathcal{W}\big)$, where each $Q_{ij}\in \Re _{+}$ is the fixed travel demand, expressed
as a volume, between origin-destination pair $\left( i,j\right) \in \mathcal{%
W}$ and $\mathcal{W}$ is the set of all origin-destination pairs.
Additionally, we define the set $\mathcal{P}_{ij}$ to be the subset of
paths that connect origin-destination pair $\left( i,j\right) \in \mathcal{W}
$.

We write the flow conservation constraints as
\begin{equation}\label{cons}
\sum_{p\in P_{ij}}\int_{t_0}^{t_f}h_p(t)\,dt~=~Q_{ij}\qquad\forall (i,\,j)\in\mathcal{W}
\end{equation}

\noindent Let us denote the vector of path flows by $h=\{h_p: p\in\mathcal{P}\}$,  in addition, we stipulate that the path flows are square integrable:
$$
h\in\big(\mathcal{L}_+^2[t_0,\,t_f]\big)^{|\mathcal{P}|}
$$
where $\big(\mathcal{L}_+^2[t_0,\,t_f]\big)^{|\mathcal{P}|}$ denotes the positive cone of the $|\mathcal{P}|$-fold product of the space $\mathcal{L}^2[t_0,\,t_f]$ consisting of square-integrable functions on $[t_0,\,t_f]$. Using the notation and concepts we have mentioned, the feasible region for path flows is
\begin{equation}\label{feasible}
\Lambda _{0}=\left\{ h\geq 0:\sum_{p\in \mathcal{P}_{ij}}%
\int_{t_{0}}^{t_{f}}h_{p}\left( t\right) dt=Q_{ij}\text{ \ \ \ }\forall
\left( i,j\right) \in \mathcal{W}\right\} \subseteq \left( \mathcal{L}_{+}^{2}\left[
t_{0},t_{f}\right] \right) ^{\left\vert \mathcal{P}\right\vert }
\end{equation}
Let us also define the essential infimum of effective travel delays%
\begin{equation*}
v_{ij}=\hbox{essinf} \left[ \Psi _{p}(t,h):p\in \mathcal{P}_{ij}\right] \text{ \ \
\ \ }\forall \left( i,j\right) \in \mathcal{W}
\end{equation*}%
The following definition of dynamic user equilibrium was first articulated by Friesz et al. (1993).
\begin{definition}
\label{duedef}{\bf (Dynamic user equilibrium)}. A vector of departure rates (path
flows) $h^{\ast }\in \Lambda _{0}$ is a dynamic user equilibrium if%
\begin{equation*}
h_{p}^{\ast }\left( t\right) >0,p\in \mathcal{P}_{ij}\Longrightarrow \Psi _{p}\left[
t,h^{\ast }\left( t\right) \right] =v_{ij}
\end{equation*}%
We denote this equilibrium by $DUE\left( \Psi ,\Lambda _{0},\left[
t_{0},t_{f}\right] \right) $.
\end{definition}
Using measure theoretic arguments, Friesz et al. (1993) established that
a dynamic user equilibrium is equivalent to the following variational
inequality under suitable regularity conditions:
\begin{equation}
\left. 
\begin{array}{c}
\text{find }h^{\ast }\in \Lambda _{0}\text{ such that} \\ 
\sum\limits_{p\in \mathcal{P}}\displaystyle \int\nolimits_{t_{0}}^{t_{f}}\Psi _{p}(t,h^{\ast
})(h_{p}-h_{p}^{\ast })dt\geq 0 \\ 
\forall h\in \Lambda _{0}%
\end{array}%
\right\} VI(\Psi ,\Lambda_0 ,\left[ t_{0},t_{f}\right] )  \label{duevi}
\end{equation}%

It has been noted in Friesz et al. (2011) that (\ref{duevi}) is equivalent
to a differential variational inequality. This is most easily seen by noting
that the flow conservation constraints may be re-stated as a two-point boundary value problem:
\begin{equation*}
\left. 
\begin{array}{l}
\displaystyle \frac{dy_{ij}}{dt}=\displaystyle \sum\limits_{p\in \mathcal{P}_{ij}}h_{p}\left( t\right) 
 \\ 
y_{ij}(t_{0})=0\\ 
y_{ij}\left( t_{f}\right) =Q_{ij}
\end{array}
\right\}\qquad \forall \left( i,j\right) \in \mathcal{W}
\end{equation*}%
where $y_{ij}(\cdot)$ is interpreted as the cumulative traffic  that has departed between origin-destination pair $(i,\,j)\in\mathcal{W}$. As a consequence, (\ref{duevi}) may be expressed as the following differential variational
inequality (DVI):\ \ \ \ \ \ \ \ \ \ \ \ \ \ \ \ \ \ \ \ \ \ \ \ \ \ \ \ \ \
\ \ \ 
\begin{equation}
\left. 
\begin{array}{c}
\text{find }h^{\ast }\in \Lambda \text{ such that} \\ 
\displaystyle\sum_{p\in \mathcal{P}}\displaystyle\int\nolimits_{t_{0}}^{t_{f}}\Psi _{p}(t,h^{\ast
})(h_{p}-h_{p}^{\ast })dt\geq 0 \\ 
\forall h\in \Lambda
\end{array}
\right\} DVI(\Psi ,\Lambda,\, [t_{0},\,t_{f}])  \label{dvi0}
\end{equation}

\noindent where
\begin{equation}\label{lambdadef}
\Lambda ~=~\left\{ h\geq 0:\frac{dy_{ij}}{dt}=\sum_{p\in P_{ij}}h_{p}\left(
t\right) ,\text{\ }y_{ij}(t_{0})=0,\text{\ }y_{ij}\left( t_{f}\right) =Q_{ij}%
\text{ \ \ }\forall \left( i,j\right) \in \mathcal{W}\right\}
\end{equation}

\noindent Analysis and computation of dynamic user equilibrium is tremendously simplified by stating it as a differential variational inequality (DVI), due to the optimal control framework inherent in the DVI problems. Finally, we are in a position to state a result that permits the solution of the DVI (\ref{dvi0}) to be obtained by solving a fixed point problem:
\begin{theorem}
\label{fix}{\bf(Fixed point re-statement)}. Assume
that $\Psi _{p}(\cdot ,h):\left[ t_0,\,t_f \right] \longrightarrow \Re_{+}$ is measurable for
all $p\in \mathcal{P}$, $h\in \Lambda  $. Then the fixed point problem%
\begin{equation}
h~=~P_{\Lambda}\left[ h-\alpha \Psi \left(
t,h\right) \right] \text{,}  \label{duefpf}
\end{equation}%
is equivalent to $DVI(\Psi ,\Lambda,\,[t_0,\,t_f] )$ where $P_{\Lambda
}\left[ \cdot \right] $ is the minimum norm projection
onto $\Lambda$ and $\alpha \in \Re _{+}$.
\end{theorem}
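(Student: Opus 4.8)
The plan is to derive the stated equivalence from the standard variational (obtuse-angle) characterization of the metric, i.e.\ minimum-norm, projection onto a closed convex set in a Hilbert space, applied to $H=\big(\mathcal{L}^2[t_0,t_f]\big)^{|\mathcal{P}|}$ equipped with the inner product $\langle u,w\rangle=\sum_{p\in\mathcal{P}}\int_{t_0}^{t_f}u_p(t)\,w_p(t)\,dt$. The key observation is that the bilinear pairing appearing in $DVI(\Psi,\Lambda,[t_0,t_f])$ is exactly this inner product evaluated at $u=\Psi(\cdot,h^{\ast})$ and $w=h-h^{\ast}$, so that the DVI (\ref{dvi0}) reads $\langle\Psi(\cdot,h^{\ast}),\,h-h^{\ast}\rangle\ge 0$ for all $h\in\Lambda$.

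Before invoking the projection, I would record the two preliminary facts that make $P_\Lambda$ well defined. First, $\Lambda$ is a nonempty, convex, and closed subset of $H$: the nonnegative cone is closed and convex, and the flow-conservation description in (\ref{lambdadef}) is equivalent to the affine conditions $\sum_{p\in\mathcal{P}_{ij}}\int_{t_0}^{t_f}h_p\,dt=Q_{ij}$, each of which is the preimage of a point under a bounded linear functional on $H$ and hence a closed affine hyperplane; the intersection with the cone is therefore closed and convex. By the Hilbert projection theorem, $P_\Lambda$ then exists and is single-valued. Second, the measurability hypothesis on $\Psi_p(\cdot,h)$ is what guarantees $\Psi(\cdot,h^{\ast})\in H$, so that $h^{\ast}-\alpha\Psi(\cdot,h^{\ast})$ is a legitimate element of $H$ to which $P_\Lambda$ may be applied.

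With these in hand, the core of the argument is the characterization: for $z\in H$, the point $w=P_\Lambda[z]$ is the unique element of $\Lambda$ satisfying $\langle z-w,\,h-w\rangle\le 0$ for every $h\in\Lambda$. I would apply this with $z=h^{\ast}-\alpha\Psi(\cdot,h^{\ast})$ and $w=h^{\ast}$. For the implication $DVI\implies$ fixed point, starting from $\langle\Psi(\cdot,h^{\ast}),\,h-h^{\ast}\rangle\ge 0$ and multiplying by $-\alpha<0$ gives $\langle(h^{\ast}-\alpha\Psi(\cdot,h^{\ast}))-h^{\ast},\,h-h^{\ast}\rangle\le 0$ for all $h\in\Lambda$, which is precisely the projection inequality identifying $h^{\ast}=P_\Lambda[h^{\ast}-\alpha\Psi(\cdot,h^{\ast})]$. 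For the converse, I would read off the projection inequality for $h^{\ast}=P_\Lambda[h^{\ast}-\alpha\Psi(\cdot,h^{\ast})]$ and divide through by $-\alpha$, reversing the inequality, to recover the DVI. Since $\alpha>0$ and $h^{\ast}\in\Lambda$ throughout, every step is reversible, which yields the claimed equivalence in both directions.

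The algebraic heart of the equivalence is thus immediate once the projection characterization is available; the only genuine work, and the step I would treat most carefully, is the functional-analytic setup, namely verifying that $\Lambda$ is closed and convex in $H$ so that the minimum-norm projection is well defined and single-valued, and that $\Psi(\cdot,h^{\ast})$ indeed lies in $H$ under the stated measurability assumption, since without square-integrability the point being projected need not belong to the Hilbert space. I would therefore state explicitly the inner-product structure of $H$ and the integrability of $\Psi$ as the standing hypotheses under which the equivalence is asserted.
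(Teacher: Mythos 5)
Your argument is correct and is the standard one: the paper itself does not prove this theorem but defers to Friesz et al.\ (2011), where the equivalence is established by exactly the obtuse-angle characterization of the minimum-norm projection onto the closed convex set $\Lambda$ that you use. The only points worth making explicit are the ones you already flag yourself: the equivalence requires $\alpha>0$ strictly (with $\alpha=0$ the fixed-point equation is satisfied by every $h\in\Lambda$), and measurability alone does not place $\Psi(\cdot,h)$ in $\big(\mathcal{L}^2[t_0,t_f]\big)^{|\mathcal{P}|}$, so square-integrability of the effective delay operator must be taken as a standing hypothesis for $P_\Lambda$ to be applied to $h-\alpha\Psi(t,h)$.
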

\begin{proof}
See \cite{Friesz2011}.
\end{proof}

\noindent Theorem \ref{fix} suggests a way of solving the dynamic user equilibrium problem via an iterative scheme of the form
$$
h^{k+1}~=~P_{\Lambda}\left[h^k-\alpha\,\Psi(t,\,h^k)\right]
$$
where $h^{k+1},\,h^k\in\Lambda$ are two consecutive iterates. Convergence of such scheme requires monotonicity, or a weaker notion of monotonicity, of the effective delay operator $\Psi(t,\,\cdot)$, which is discussed in \cite{Nagurney} and \cite{Friesz2011}. 

\subsection{The DNL subproblem}\label{secDNL}

A crucial component of the VI and DVI formulations of dynamic user equilibrium is the effective delay operator, typically obtained from {\it dynamic network loading} (DNL), which is a subproblem of a complete DUE model. Any DNL must be consistent with the established path flows and link/path delay models, and is usually performed under the {\it first-in-first-out} (FIFO) principle.

In this paper, we employ the LWR-Lax model proposed by \cite{Friesz2013}. The LWR-Lax model is a simplified version of the LWR model on networks. It is based on the assumption that any queues induced by congestion does not have physical size, thus no spill back occurs in the network. The link dynamics, link delay models and route and departure time choices are expressible as a system of {\it differential algebraic equations} (DAEs). The  DAE system for the network loading submodel is derived via a variational method, known as the Lax-Hopf formula \cite{Evans, Lax}, for scalar conservation laws and Hamilton-Jacobi equations.  Due to space limitation, we will present such DAE system below without elaborating its mathematical details. The reader is referred to \cite{Friesz2013} for more discussion. However, it is important for us to note that the modeling framework and solution methodology for sustainable congestion management proposed in this paper is independent of the DNL model chosen. In other words, it is expected that our model should yield similar qualitative result and managerial insights when other types of DNL models are employed in the computation of DUE and in the estimation of network-wide emission.

Given a vehicular network represented as a directed graph $G(\mathcal{A},\,\mathcal{V})$, where $\mathcal{A}$ denotes the set of arcs (links), and $\mathcal{V}$ denotes the set of vertices (nodes). We define for each arc $e\in\mathcal{A}$, the free flow speed $v^e_0$ and the jam density $\rho_{jam}^e$. Assume that the arc dynamic is governed by the following conservation law
\begin{equation}\label{claw}
\partial_t\,\rho^e(t,\,x)+\partial_x\,f^e\big(\rho^e(t,\,x)\big)~=~0\qquad (t,\,x)\in[t_0,\,t_f]\times [0,\,L^e]
\end{equation}
where $[t_0,\,t_f]\times[0,\,L^e]$ denotes the temporal-spatial domain of the partial differential equation. $\rho^e(t,\,x)$ represents the (local) vehicle density at location $x$ and time $t$. The fundamental diagram $f^e(\cdot)$, as a function of local density only,  is assumed to be continuous, concave and vanishes at $\rho^e=0$ and $\rho^e=\rho_{jam}^e$, where $\rho_{jam}^e$ represents jam density of link $e$. Let us  introduce a few more notations:
\begin{align*}
&\mathcal{W}:~~\hbox{the set of origin-destination pairs in the network}\\
&\mathcal{P}:~~\hbox{the set of paths utilized by travelers}\\
&\mathcal{P}_{ij}:~~\hbox{the set of utilized paths that
connects origin-destination pair \({(i,j)\in \mathcal{W}}\)}\\
&p~=~\{e_1,\,e_2,\,\ldots,\,e_{m(p)}\}\in\mathcal{P},\,\,e_i\in\mathcal{A} :~~\hbox{path represented by the set of  arcs it traverses, where }\\
&\qquad ~~ m(p) \hbox{ denotes the number of links traversed by path } p\\
&h_p(t):~~\hbox{departure rate (path flow) at origin, associated with path}~p\\
&q^e_p(t):~~\hbox{link}~e~ \hbox{entering flow associated with path}~p\\
&w^e_p(t):~~\hbox{link}~e~\hbox{exiting flow associated with path}~p\\
&Q^e_p(t):~~\hbox{cumulative entering vehicle count on arc}~e~\hbox{associated with path}~p\\
&W^e_{p}(t):~~\hbox{cumulative exiting vehicle count at arc}~e~\hbox{associated with path}~p\\
&L^e:~~\hbox{length of arc}~e\in\mathcal{A} 
\end{align*}

\noindent In addition, let us define the following function 
$$
\phi^e(u)~=~\min \left\{\rho\in [0,\,\rho^e_{jam}]: ~~ f^e(\rho)~=~u\right\} \qquad u\in[0,\,M^e]
$$
and its Legendre transformation
$$
\psi^e(p)~=~\sup_u\left\{up- \phi^e(u)\right\}
$$
where $M^e$ denotes the flow capacity of link $e$. Moreover, we denote by $D(t\,;\,Q^e)$ the time taken to traverse link $e$ when the time of entry is $t$, under the link entering flow profile $Q^e$ where 
$$
Q^e(t)~\doteq~\sum_{e\in p}Q_p^e(t),\qquad \qquad W^e(t)~\doteq~\sum_{e\in p}W_p^e(t)
$$

\noindent By convention, we write~ $q_p^{e_1}(t)~=~h_p(t)$,~ $w_p^{e_0}(t)~=~h_p(t)$. The following DAE system (\ref{def1})-(\ref{divrt}) for the dynamic network loading is given in \cite{Friesz2013}.
\begin{align}
\label{def1}
&Q^{e}(t)~\doteq~\sum\limits_{e\in p}Q^{e}_{p}(t),\quad q^{e}(t)~\doteq~\sum_{e\in p}q^{e}_p(t),\quad w^e(t)~\doteq~\sum_{e\in p}w^e_p(t)\\
\label{def2}
 &{d\over dt}Q^e_p(t)~=~q^e_p(t),\quad {d\over dt}W^e(t)~=~w^e(t)\quad \forall~p\in\mathcal{P}\\
\label{junc}& q_p^{e_{i}}(t)~=~w^{e_{i-1}}_p(t);\qquad i\in[1,\,m(p)],~p\in\mathcal{P}\\
\label{Lax'}&\displaystyle W^e(t)~=~\min\limits_{\tau}\Big\{Q^e(\tau)+L^e\psi^e\Big({t-\tau\over L^e}\Big)\Big\};\qquad \forall~e\in\mathcal{A} \\
\label{fconstr}&Q^e(t)~=~W^e\big(t+D(t;\,Q^e)\big);\\
\label{divrt}&\displaystyle w^{e_i}_{p}\big(t+D(t;\,Q^{e_i})\big)~=~{q^{e_{i}}_p(t)\over q^{e_{i}}(t)}w^{e_i}\big(t+D(t;\,Q^{e_i})\big);\qquad i\in[1,\,m(p)],~p\in\mathcal{P}
\end{align}

We note that (\ref{def1}) is definitional, i.e. the traffic on an arc is disaggregated by different route choices.  (\ref{junc}) represents the fundamental recursion, which allows the algorithm to carry forward to the next arc in the path. (\ref{Lax'}) is the Lax-Hopf formula (Bressan and Han, 2011a,b).    (\ref{fconstr}) is often referred to as the flow propagation constraint, from which the travel time function $D(\cdot; Q^e)$ can be solved. (\ref{divrt}) describes the model of diverge junctions where travelers' route choices are explicitly considered.

One  shortcoming of the above DNL procedure is the lack of consideration for spillback. Vehicle spillback  not only aggravates congestion and causes higher travel delay, but also produce more stop-and-go waves (Colombo and Groli 2003) that affects the estimation of traffic emission. However, as mentioned before, our modeling framework can subsume any network loading procedures regardless of the link dynamic, flow propagation and delay model employed. One aspect of future research is to incorporate vehicle spillback in the DNL submodel and investigate its influence on the best tolling strategy and overall performance of the traffic network in terms of travel delay and environmental impact.

\section{The  DNL Submodel Integrated with Emission Models}\label{secemissionmodels}
This section, presents two  approaches for modeling  traffic emission on a road network. The emission model will be considered in connection with the DNL subproblem. As a result, the output of the DNL subproblem will include 1) the effective delay associated with each pair of departure time and route choices, and 2) the emission associated with each pair of departure time and route choices, as well as the total emission of the network.

\subsection{Emission as a functional of velocity and acceleration}\label{sectwoarg}

Consider a road network $G(\mathcal{A},\,\mathcal{V})$.  For each arc $a\in\mathcal{A}$, let us denote by $\rho_a(t,\,x),\, v_a(t,\,x)$ the local density and average velocity of vehicles at time $t$ and location $x$. The classical Lighthill-Whitham-Richards (LWR) model (Lighthill and Whitham 1955, Richards 1956) describes the temporal-spatial evolution of $\rho_a(t,\,x)$ via the following scalar conservation law
\begin{equation}\label{lwrpde}
{\partial\over \partial t}\rho_a(t,\,x)+{\partial\over\partial x}\Big(\rho_a(t,\,x)\, v\big(\rho_a(t,\,x)\big)\Big)~=~0
\end{equation}
where the velocity is expressed as an explicit function of density. The map $\rho\mapsto \rho\cdot v(\rho)$ is interpreted as the fundamental diagram.

Following emission models proposed by \cite{Barth, Smit2006}, we assume that the emission rate $e(t)$ of a moving vehicle can be modeled as a function of its instantaneous velocity $v(t)$ and acceleration $a(t)$: 
\begin{equation}\label{efunc}
e(t)~=~\mathcal{E}\big(v(t),\, a(t)\big)\footnote{We note that the function $\mathcal{E}$ should not be taken directly from a microscopic emission model, such as that in \cite{Barth}. Rather, such a function should be carefully calibrated and validated in connection with macroscopic traffic models and the result should reflect emission rate on a macroscopic level.}
\end{equation}

\noindent Consider an arc \,$a\in\mathcal{A}$\, expressed as a spatial interval $[0,\,L_a]$ and the weak solution $\rho_a(t,\,x)$, $(t,\,x)\in[t_0,\,t_f]\times [0,\,L_a]$ of the LWR conservation law (\ref{lwrpde}).  Then the total emission on this arc is computed as 
\begin{align}\label{totalemission}
&\int_{t_0}^{t_f}\int_{0}^{L_a} \rho_a(t,\,x)\cdot e(t,\,x)\,dx\,dt\\
~=~& \int_{t_0}^{t_f}\int_0^{L_a}\rho_a(t,\,x)\left(\mathcal{E}\left(v_a(t,\,x),\, {D\over Dt}\,v_a(t,\,x)\right)\right)\,dx\,dt\\
\label{vamodel}
~=~& \int_{t_0}^{t_f}\int_0^{L_a}\rho_a(t,\,x)\left(\mathcal{E}\left(v_a,\,\, {\partial\over \partial t}v_a+v_a\cdot {\partial\over\partial x} v_a\right)\right)\,dx\,dt
\end{align}
where  ${D\over Dt}\doteq {\partial\over\partial t} + v_a\cdot {\partial\over\partial x}$ is the material derivative in Eulerian coordinates corresponding to the acceleration of the car in Lagrangian ones. The variable $e(t,\,x)$ denotes the local emission rate at location $x$ at time $t$. 

Notice that the expression in (\ref{vamodel}) is not well-defined in the context of scalar conservation laws as the solution $\rho_a$ and $v_a$ are not continuous in general. As an alternative, one may interpret \eqref{vamodel} in a discrete-time framework such as {\it cell transmission model} (CTM) proposed in \cite{CTM1, CTM2}. The partial derivatives are approximated by finite-differences and the integrals are approximated by appropriate quadratures. Again, the function $\mathcal{E}$ should be calibrated in connection with cell transmission model. The implementation of the above emission model is straightforward, but is not within the scope of this paper.

\subsection{Emission as a functional of velocity}\label{secemissionmodel}

The second emission model discussed in this section is a speed-related emission models. Such model can be easily embedded into the dynamic network loading subproblem mentioned in Section \ref{secDNL}. Within this model, it is assumed that the average emission rate of a traveling vehicle is expressed as a function of its average travel speed for an arbitrary period of time. Such a model ignores  granularity related to instantaneous speed and acceleration/deceleration and is calibrated and validated via empirical data, see, for example, Rose et al. (1965), Kent and Mudfor (1979) and CARB (2000). The function relating average emission rate to average velocity is written as:
\begin{equation}\label{efuncvel}
\bar e(t)~=~\overline{\mathcal{E}}\big(\bar v(t)\big)
\end{equation}
where $\bar e(t)$ and $\bar v(t)$ denotes average emission rate (per unit of time) and average velocity, respectively.   $\bar v(t)$ can be averaged over a time period during which the vehicle traverses a whole link.  Specifically, given any feasible path flows $h\in\Lambda$, one can solve the DNL problem using the DAE system proposed by \cite{Friesz2013}.  Let $D_p(t,\,h)$ be the time taken for a driver who departs at $t$ to traverse the path $p$.  In addition, we let $\tau_{a_i}^p(t)$ be the time of exit from arc $a_i$ given that departure from the origin occurs at  time $t$ and path $p$ is followed, where $p=\{a_1,\,\ldots,\,a_{m(p)}\}$. Then the average speed on link $a_i$ when the departure time from the origin occurs at $t$, denoted by $\bar v_{a_i}(t,\,h)$, is given by
$$
\bar v_{a_i}(t,\,h)~=~{L_{a_i}\over \tau_{a_i}^p(t)-\tau_{a_{i-1}}^p(t)}\qquad t\in[t_0,\,t_f],\quad  p\in\mathcal{P}
$$
where $L_{a_i}$ is the length of arc $a_i\in p$. In view of identity (\ref{efuncvel}),  the contribution to total emission of user departing at time $t$ along path $p$ is given by 
\begin{equation}\label{emissionvel}
E_p(t,\,h)~=~\sum_{a_i\in p}\left(\tau_{a_i}^p(t)-\tau_{a_{i-1}}^p(t)\right)\cdot \overline{\mathcal{E}}\left({L_{a_i}\over \tau_{a_i}^p(t)-\tau_{a_{i-1}}^p(t)}\right)
\end{equation}
\noindent The left hand side of \eqref{emissionvel} is expressed in the form of an operator 
$$
\begin{array}{c}
\displaystyle E: \big(\mathcal{L}^2_+[t_0,\,t_f]\big)^{|\mathcal{P}|}~\rightarrow~  \big(\mathcal{L}^2_{++}[t_0,\,t_f]\big)^{|\mathcal{P}|}
\\\\
h~\mapsto~E(\cdot,\,h)~=~\big(E_p(\cdot,\,h):~p\in\mathcal{P}\big)
\end{array}
$$
\noindent Such an operator depends only on knowledge of the delay operator, and in turn is known completely once the vector of path flows $h$ is given. This concludes our embedding of the emission model into the DNL procedure. Another advantage of expressing the path and departure-time specific emission as an operator is that it facilitates the derivation of gradient of the objective function presented  in Section \ref{secPPGM}.

The total emission in the network given the vector of path flows $h$ is readily calculated as
\begin{equation}\label{totalemission}
\hbox{total emission}~=~\sum_{p\in\mathcal{P}}\int_{t_0}^{t_f} h_p(t)\cdot E_p(t,\,h)\,dt
\end{equation}

\begin{remark}\label{vemission}
The emission functions proposed in Rose et al. (1965), Kent and Mudford (1979) and CARB (2000) all measure the emission per unit distance $e_x$ against travel speed, where $e_x$ denotes the spatial partial derivative of emission rate. See, for example, \eqref{rose}, \eqref{kentmudford} and \eqref{emfac}.  We employ a simple technique to transform the emission per unit distance to the emission per unit time so that the above modeling framework can be applied. Specifically, notice that 
\begin{equation}\label{emissionrate}
\bar e(t)~=~{\partial \over \partial t} e~=~{\partial x\over \partial t}\,{\partial \over \partial x} e(t,\,x)~=~\bar v(t)\cdot e_x
\end{equation}
thus transforming $e_x$ to $\bar e(t)$.
\end{remark}

\section{Multi-objective Toll Pricing}\label{secMultiobj}
Most of the current MPEC-based dynamic traffic assignment problems deal with a single objective. \cite{LH}  study  efficient tolling strategies in a static network.
\cite{Friesz2007} extend their work to consider dynamic congestion tolls.  \cite{Yao} further investigate the dynamic congestion pricing problem
with demand uncertainty. In these abovementioned studies, the Stackelberg leader (central authority) seeks to minimize a single objective function which is the total (effective) delay. However, the problem of  congestion pricing with emission consideration, as we study in this paper, is more subtle. Difficulties and paradoxes may arise from the fact that the most environment-friendly driving conditions turn out to be inefficient in terms of travel time \citep{emfac}. Therefore, one major challenge faced by researchers is to resolve the conflict between two potentially opposing objectives: the transportation efficiency and emission level. In dealing with such difficulty,  we formulate our MPEC problem as a bi-objective program:
\begin{equation}\label{multiobj}
\min_\mathcal{Y} \mathcal{U}~=~\left[\sum_{p\in \mathcal{P}} \int_{t_0}^{t_f}\Psi_p(t,h^{\ast
})\,h^{*}_p(t)\,dt, \quad \sum_{p\in \mathcal{P}}  \int_{t_0}^{t_f}{E}_p(t,h^*)\,h^{*}_p(t)\, dt \right]
\end{equation}
subject to
\begin{equation}\label{viconstraint}
\sum_{p\in P}\int_{t_0}^{t_f}\left(\Psi _{p}(t,h^{\ast
})+\delta_{a,p}\mathcal{Y}_a\right)\left(h^*_{p}-h_{p}\right)\,dt~\leq~0 \qquad\forall h\in\Lambda
\end{equation}
\begin{equation}
h^{*}\in\Lambda
\end{equation}
\begin{equation}
\Lambda~=~\left\{ h\geq 0:\frac{dy_{ij}}{dt}=\sum_{p\in P_{ij}}h_{p}\left(
t\right) ,\text{\ }y_{ij}(t_{0})=0,\text{\ }y_{ij}\left( t_{f}\right) =Q_{ij}%
\text{ \ \ }\forall \left( i,j\right) \in \mathcal{W}\right\}
\end{equation}
\begin{equation}\label{yulbound}
0~\leq~\mathcal{Y}_a~\leq ~Y_{UB}\qquad \forall a\in\mathcal{A}
\end{equation}

\noindent where   $\mathcal{Y}=\left(\mathcal{Y}_a:a\in\mathcal{A}\right)$. In the objective function defined in \eqref{multiobj}, the effective path delay operator $\Psi_p(\cdot,\,\cdot)$ is defined in \eqref{psidef}, while the ``path emission operator" $E_p(\cdot,\,\cdot)$ is defined in \eqref{emissionvel}.  The first term appearing on the right hand side of (\ref{multiobj}) is the total effective delay; the second term is the total emission on the network.  In Constraint \eqref{viconstraint}, we define
\begin{displaymath}\delta_{a,p}=\begin{cases}1 & \text{if path}~p~\text{traverses arc}~a \\
0 & \text{otherwise} \\
\end{cases}\end{displaymath}
The constant $Y_{UB}\in\Re_{++}$ denotes the prescribed upper bound of the toll.   Constraint (\ref{viconstraint}) is recognized as the variational inequality formulation of DUE, taking the toll prices $\mathcal{Y}_a$ as part of the users' disutility. One crucial component in the formulation above is the effective delay operator $\Psi(\cdot,\,\cdot): \Lambda\rightarrow \big(\mathcal{L}_+^2[t_0,\,t_f]\big)^{|\mathcal{P}|}$. Typically, $\Psi$ is not knowable in closed form; the numerical evaluation of such operator is performed by the dynamic network loading procedure, in particular, by  the DAE system \eqref{def1}-\eqref{divrt} presented in Section \ref{secDNL}.

In summary, the proposed MPEC model for sustainable congestion pricing problem consists of \eqref{multiobj}-\eqref{yulbound},  (\ref{def1})-(\ref{divrt}), and (\ref{emissionvel})-(\ref{totalemission}).

\vspace{0.2 in}

Notice that  \(\mathcal{U}\) is a vector of two objective functions. Thus, minimizing \(\mathcal U\) in (\ref{multiobj}) means that we are seeking to find a Pareto optimal solution, whose formal definition is as follows.
\begin{definition}\label{Definition Pareto}{\bf (Pareto Optimal)} For a multi-objective
optimization problem of the form:
\begin{displaymath}
\min F(x)=[F_1(x),F_2(x),...,F_k(x)]^T
\end{displaymath}
subject to
\begin{displaymath}
 x\in X
\end{displaymath}
a feasible solution \(x^*\in X\), where \(X\) denotes the feasible region, is Pareto
optimal if and only if there does not exist another solution, \(x\in X\),
such that \(F(x)\leq F(x^*)\), and \(F_i(x)<F_i(x^*)\) for at least one function.
\end{definition}
A Pareto optimum requires that no other feasible solutions can improve at
least one objective without deteriorating another. Seeking to attain a Pareto
optimum, we employ  the so-called {\it weighted sum method} \citep{Zadeh, Murata}.
Some new insights on the weighted sum scalarization can be found in \cite{MA}.

\section{Solution Methodology}\label{secPPGM}
The variational inequality \eqref{viconstraint} is a semi-infinite constraint that does not admit known solution schemes. However, it can be reformulated as complementarity constraints as follows:
\begin{equation}
\left(\Psi _{p}(t,h^{\ast
})+\delta_{a,p}\mathcal{Y}_a-\mu_{ij}\right)~\perp~ h^{*}_p(t)  \qquad \forall p\in
\mathcal{P} _{ij},\quad \forall (i,\,j)\in\mathcal{W}
\end{equation}
\begin{equation}
\Psi _{p}(t,h^{\ast})+\delta_{a,p}\mathcal{Y}_a-\mu_{ij}~\geq~ 0 \qquad \forall p\in  \mathcal{P}
_{ij},\quad (i,\,j) \in\mathcal{W}
\end{equation}
\begin{equation}
h^{*}_p~\geq~0\qquad \forall p\in\mathcal{P}
\end{equation}
where $h^{*}=\big(h_p^*:\, p\in\mathcal{P}\big)\in\Lambda$. With complementarity constraints substituting the
VI in the MPEC model, we are able to obtain a single level mathematical program
defined by (\ref{multiobj}) and (\ref{def1})-(\ref{divrt}), and (\ref{emissionvel})-(\ref{totalemission}). Since the complementarity constraints may not satisfy the {\it Mangasarian-Fromovitz constraint qualification} (MFCQ) \citep{RM, IS}, we instead  apply a quadratic penalty method to handle these constraints.   The quadratic penalty approach, also known as the sequential penalty technique, is tested numerically with positive results in solving MPCC problems in \cite{MM}.  Following the quadratic penalty method, we penalize the complementarity constraints and obtain an augmented objective function:
\begin{equation}
\mathcal{U}~=~\left[U_1(h^{*},\mathcal{Y},\mu,M),~~  U_2(h^{*},\mathcal{Y},\mu,M)\right]
\end{equation}
where
\begin{equation}
U_1(h^{*},\mathcal{Y},\mu,M)~=~\sum_{(i,j)\in\mathcal{W}}\sum_{p\in
\mathcal{P}_{ij}}\int^{t_f}_{t_0}\Psi_p(t,h^{*})\, h^{*}_p\, dt+\mathcal{Q}(h^{*},\mathcal{Y},\mu,M)
\end{equation}

\begin{equation}
U_2(h^{*},\mathcal{Y},\mu,M)~=~\sum_{(i,j)\in\mathcal{W}}\sum_{p\in
\mathcal{P}_{ij}}\int^{t_f}_{t_0}
{E}_p(t,h^{*})\,h_p^*\,dt+\mathcal{Q}(h^{*},\mathcal{Y},\mu,M)
\end{equation}
\begin{multline}
\mathcal{Q}(h^{*},\mathcal{Y},\mu,M)~=~M\sum_{(i,j)\in\mathcal{W}}\sum_{p\in
\mathcal{P}_{ij}}\int^{t_f}_{t_0}\left[\left(\Psi_p(t,h^{*})+\delta_{a,p}\mathcal{Y}_{a}-\mu_{ij}\right)h^{*}_p\right]^2dt\\+M\sum_{(i,j)\in\mathcal{W}}\sum_{p\in
\mathcal{P}_{ij}}\int^{t_f}_{t_0}\left[max\left\{\mu_{ij}-\Psi_p(t,h^{*})-\delta_{a,p}\mathcal{Y}_{a},0\right\}\right]^2dt
\end{multline}
where $\mu~=~(\mu_{ij}:ij\in\mathcal{W})$, and \(M\) is a properly large number.
In order to compute the above multi-objective problem, we use a simple but
commonly-used weighted sum scalarization method:

\begin{equation} S_u(h,\mathcal{Y},\mu,M)=\alpha U_{1}(h^{*},\mathcal{Y},\mu,M)+\beta
U_{2}
(h^{*},\mathcal{Y},\mu,M)\end{equation}where $\alpha,\beta \in\Re_{++}$
 are weights for the two objectives. For normalization, we further require that
\(\alpha+\beta=1\). Then, the original MPEC becomes a single-level single-objective
 problem. Such a problem is computed with the gradient projection method
of \cite{DODG}.

\section{Numerical Study}\label{secnumerical}
In this section, we will present a numerical solution of the proposed MPEC problem and demonstrate the  effectiveness of the resulting optimal toll in mitigating both congestion and emission. The toy network of interest is  depicted in Figure \ref{figsmall}, which consists of six arcs and five nodes. There are two origin-destination pairs, $(1,\,3)$ and $(2,\,3)$, among which six paths are utilized, that is,
$$
\mathcal{P}_{1,3}~=~\{p_1,\,p_2,\,p_3,\,p_4\},\qquad \mathcal{P}_{2,3}~=~\{p_5,\,p_6\}
$$
$$
p_1=\{3,\,6\}, \quad p_2=\{1,\,2,\,6\},\quad p_3=\{1,\,2,\,4,\,5\} ,\quad p_4=\{3,\,4,\,5\},\quad p_5=\{6\},\quad p_6=\{4,\,5\}
$$
We assume that  arc $1$ is tollable. Thus the upper-level decision variable of the MPEC problem is a time-varying toll price imposed on arc $1$. The lower level is a Nash-like game whose equilibrium is described by the DUE model where drivers choose their own departure time and route in order to minimize the travel cost, including a toll price. 
\begin{figure}[h!]
   \centering
 \includegraphics[width=.55\textwidth]{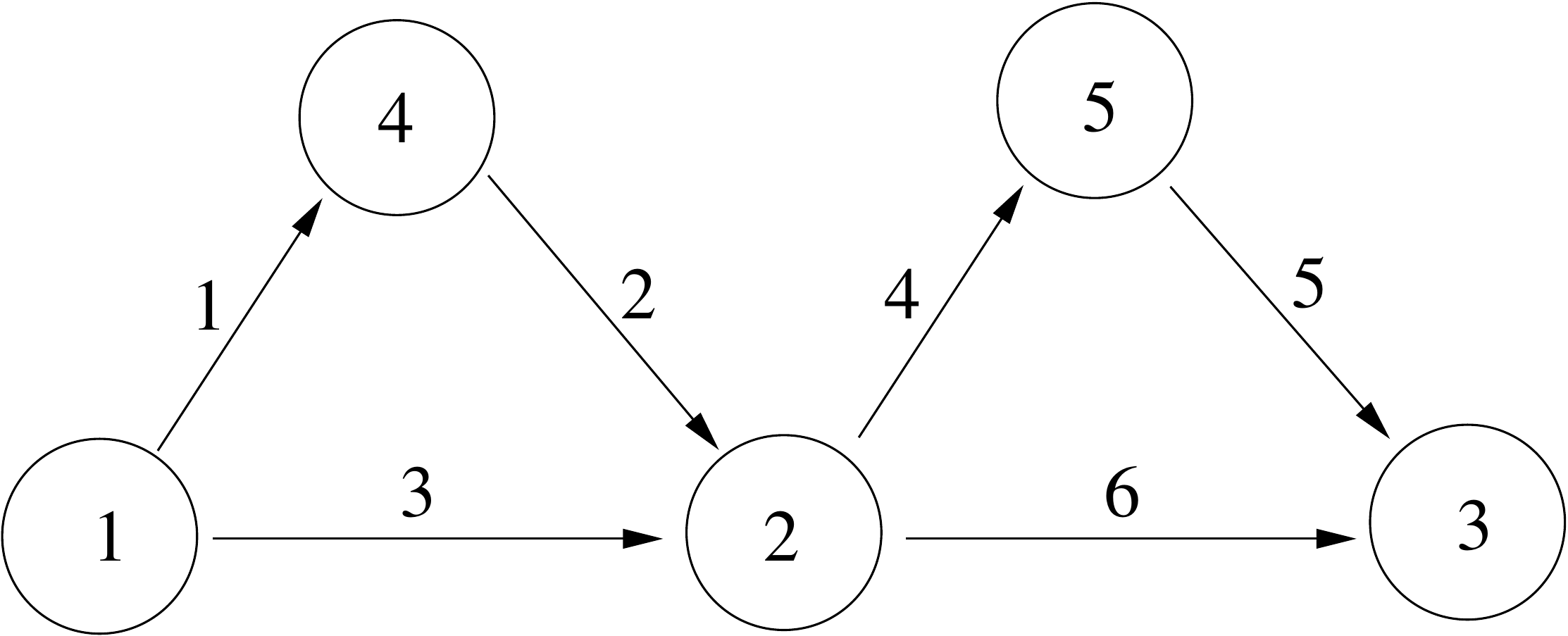}
   \caption{\small The six-arc, five-node network}
   \label{figsmall}
\end{figure}

\subsection{Numerical setup}
We fix a morning commute horizon spanning five hours from 6:00 am to 11:00 am.  The attributes of the arcs are shown in Table \ref{tabarc}.  
\begin{table}[h!]
\begin{center}
\begin{tabular}{|c|c|c|c|}
\hline
Arc  & Jam density    &  Free flow speed   & Length \\
     &   (vehicle/mile)     &  (mile/hour)     &  (mile)   \\
\hline\hline
1   &  400 &35 &  10\\
\hline
2 &  400   &  35 & 10\\
\hline 
3 &   400  &  35  & 10 \\
\hline
4 &  400   & 35 &  20 \\
\hline
5  &  400  &  35 & 20 \\
\hline 
6  & 400   &  35  & 15 \\
\hline
\end{tabular}
\end{center}
\caption{Arc attributes.}
\label{tabarc}
\end{table}
We employ the emission model discussed in Section \ref{secemissionmodel} and Remark \ref{vemission}:
$$
\bar{\mathcal{E}}\big(\bar v(t))~=~\bar v(t)\cdot e_x
$$
where the hot running emission $e_x$ is given by (\ref{emfac}):
\begin{equation}\label{emfac1}
e_x~=~\hbox{BER}\times \exp\left\{ b_1(v-17.03)+b_2(v-17.03)^2 \right\}
\end{equation}
where $\hbox{BER}=2.5$,  $ b_1=-0.04$, $b_2=0.001$ (Smit 2006).  We consider two cases in our computation:
\begin{itemize}
\item[I.] The demand matrix is $(Q_{1,3},\,Q_{2,3})=(820,\,410),$ and the upper
bound for toll price is~$Y_{UB}=10$.

\item[II.] The demand matrix is $(Q_{1,3},\,Q_{2,3})=(1400,\,700)$, and the upper
bound for toll price is~$Y_{UB}=10$.
\end{itemize}

\subsection{Numerical results}
The solution algorithm for MPCC  is implemented  in Matlab (2010a), which runs on the Intel Xeon $3160$ Dual-Core 3.0 GHz processor provided by the Penn State High Performance Computing center. The computational time spent to obtain the numerical solutions below ranges from two to three hours.

\subsubsection{Case I}\label{seccase1}

The numerical results from Case I are displayed  in Figure  \ref{figsmall_due_departure}, \ref{figsmall_mpec_departure}, \ref{figsmall_toll} and \ref{figsmall_diff_departure}. For comparison reasons, we plot simultaneously the equilibrium path flows with and without tolling, in Figure \ref{figsmall_due_departure} and Figure \ref{figsmall_mpec_departure} respectively. The time-varying optimal toll on arc $1$ is depicted in Figure \ref{figsmall_toll}.  

Notice that two paths, $p_2$ and $p_3$,  traverse link $1$; as a result, their associated path flows are affected directly by the toll. In the MPEC solution with toll, the path flows on $p_2$ and $p_3$ diminish to the point where path $p_3$ is not used by any traveler and hardly is path $p_2$. Figure \ref{figsmall_diff_departure} shows the differences in the equilibrium path flows with and without toll. It is clearly observed that most traffic volume on path $p_2,\,p_3$ switch to path $p_1$ and $p_4$, as a consequence of the toll imposed on arc $1$. 

We  compare the two objective functions under the equilibrium conditions with and without toll. The results are summarized in Table \ref{tabcase1}. By imposing the toll, we are able to reduce the total travel cost and total emission by $2.9\%$ and $10.4\%$ respectively.

\begin{figure}[h!]
\begin{minipage}[b]{.49\textwidth}
\centering
\includegraphics[width=.8\textwidth]{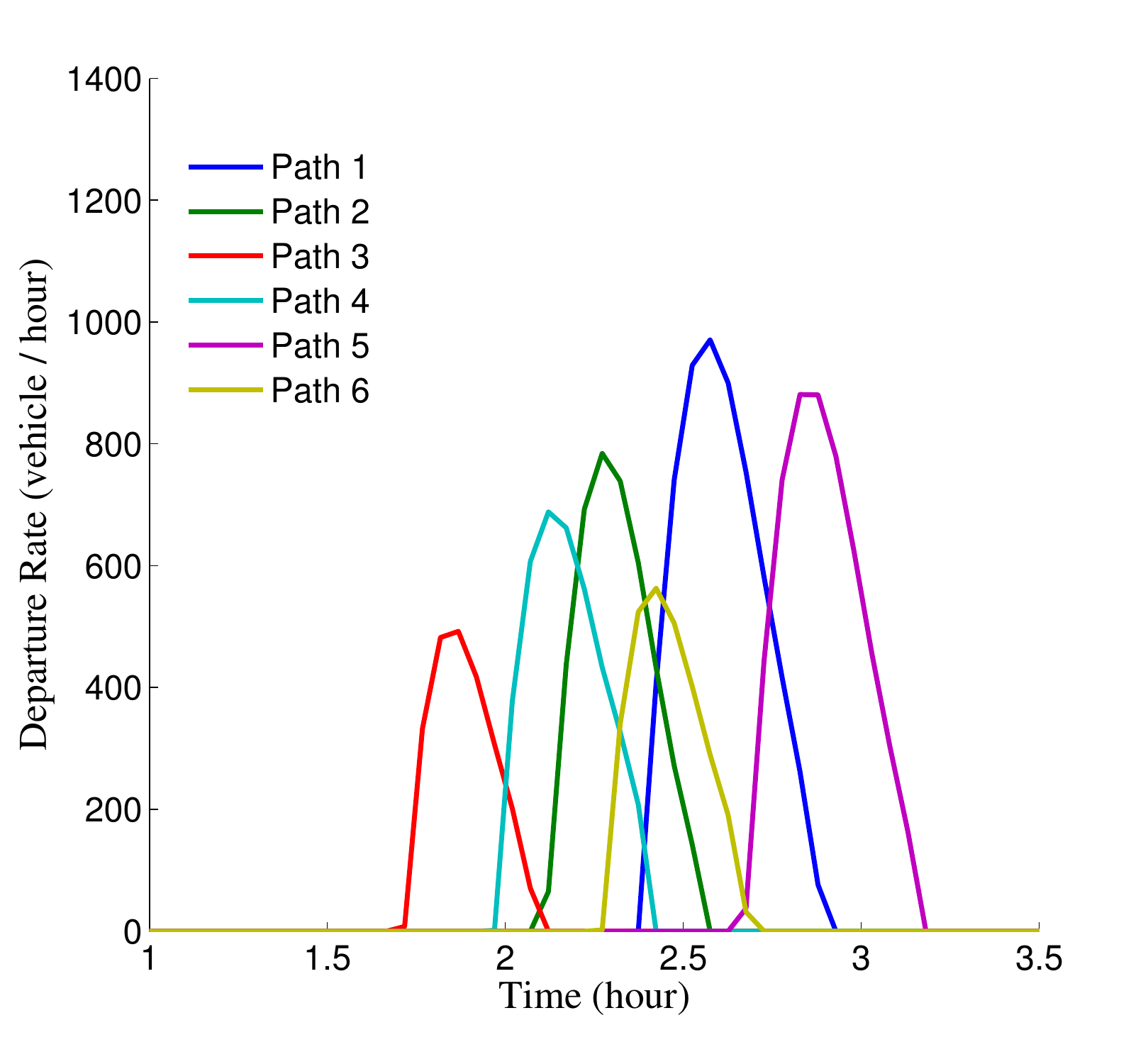}
\caption{Case I: DUE solution without any toll}
\label{figsmall_due_departure}
\end{minipage}
\begin{minipage}[b]{.49\textwidth}
\centering
\includegraphics[width=.8\textwidth]{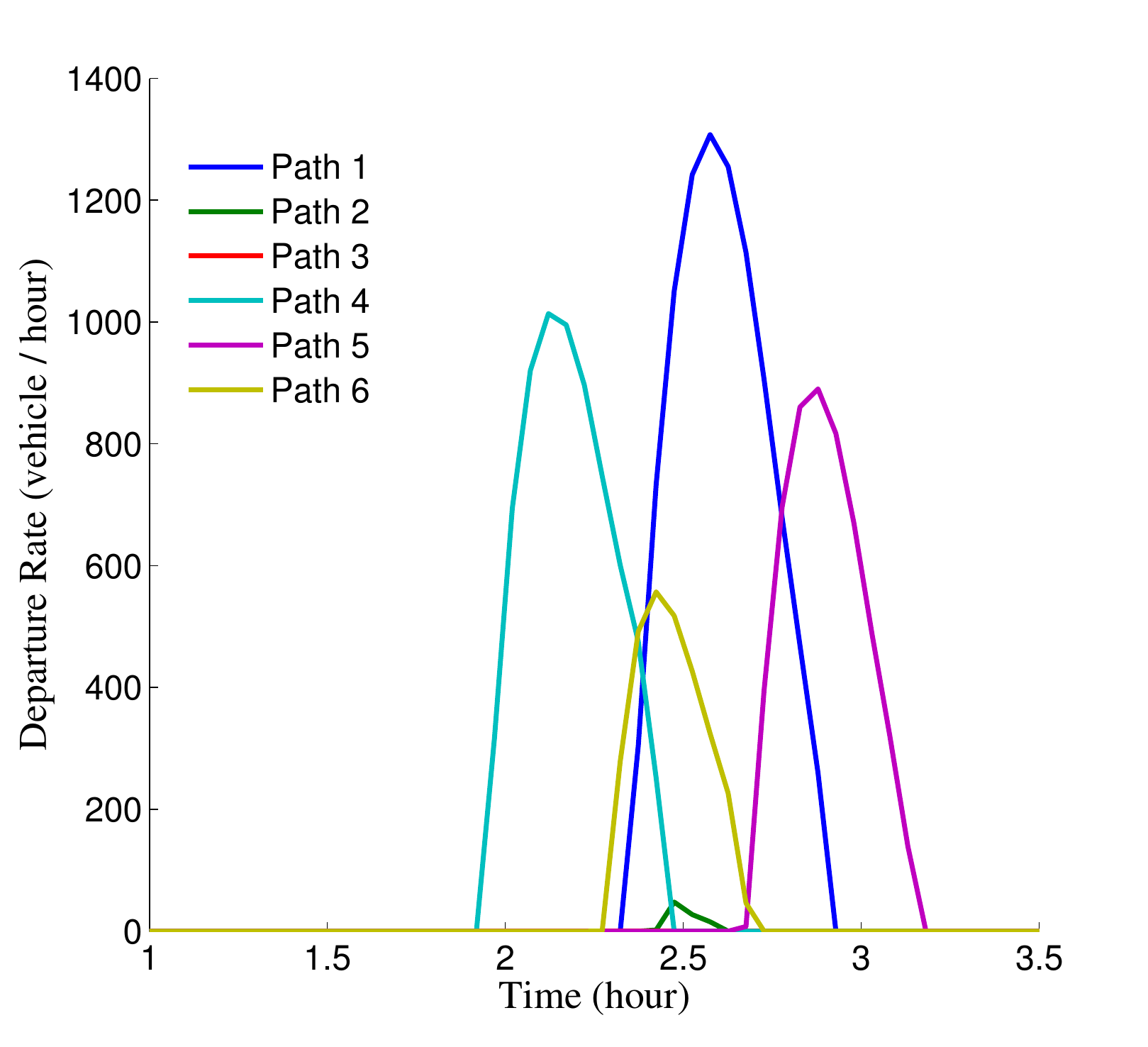}
\caption{Case I: DUE solution with optimal toll.}
\label{figsmall_mpec_departure}
\end{minipage}
\end{figure}

\begin{figure}[h!]
\begin{minipage}[b]{.49\textwidth}
\centering
\includegraphics[width=.8\textwidth]{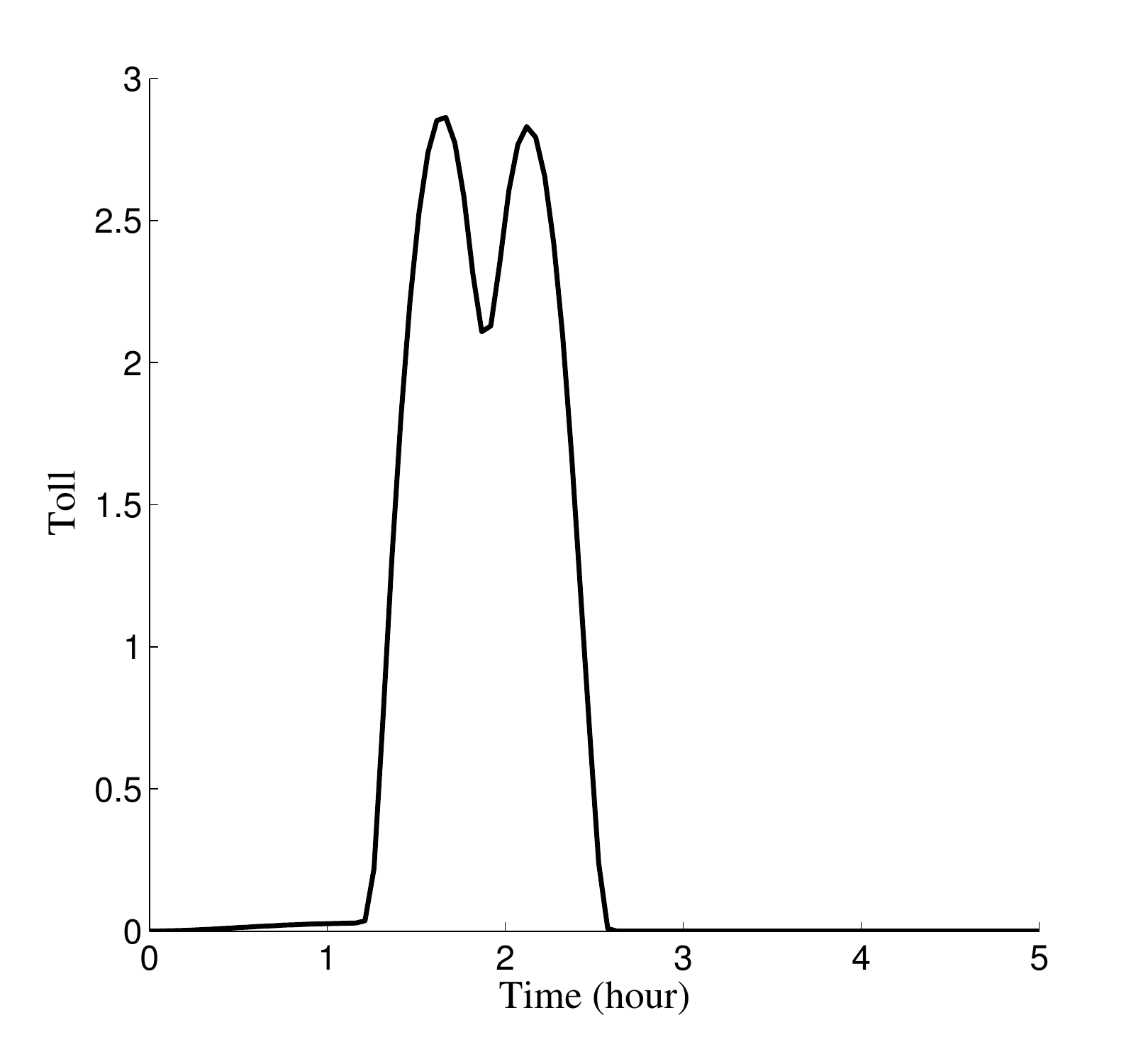}
\caption{Case I: optimal toll on arc 1.}
\label{figsmall_toll}
\end{minipage}
\begin{minipage}[b]{.49\textwidth}
\centering
\includegraphics[width=.8\textwidth]{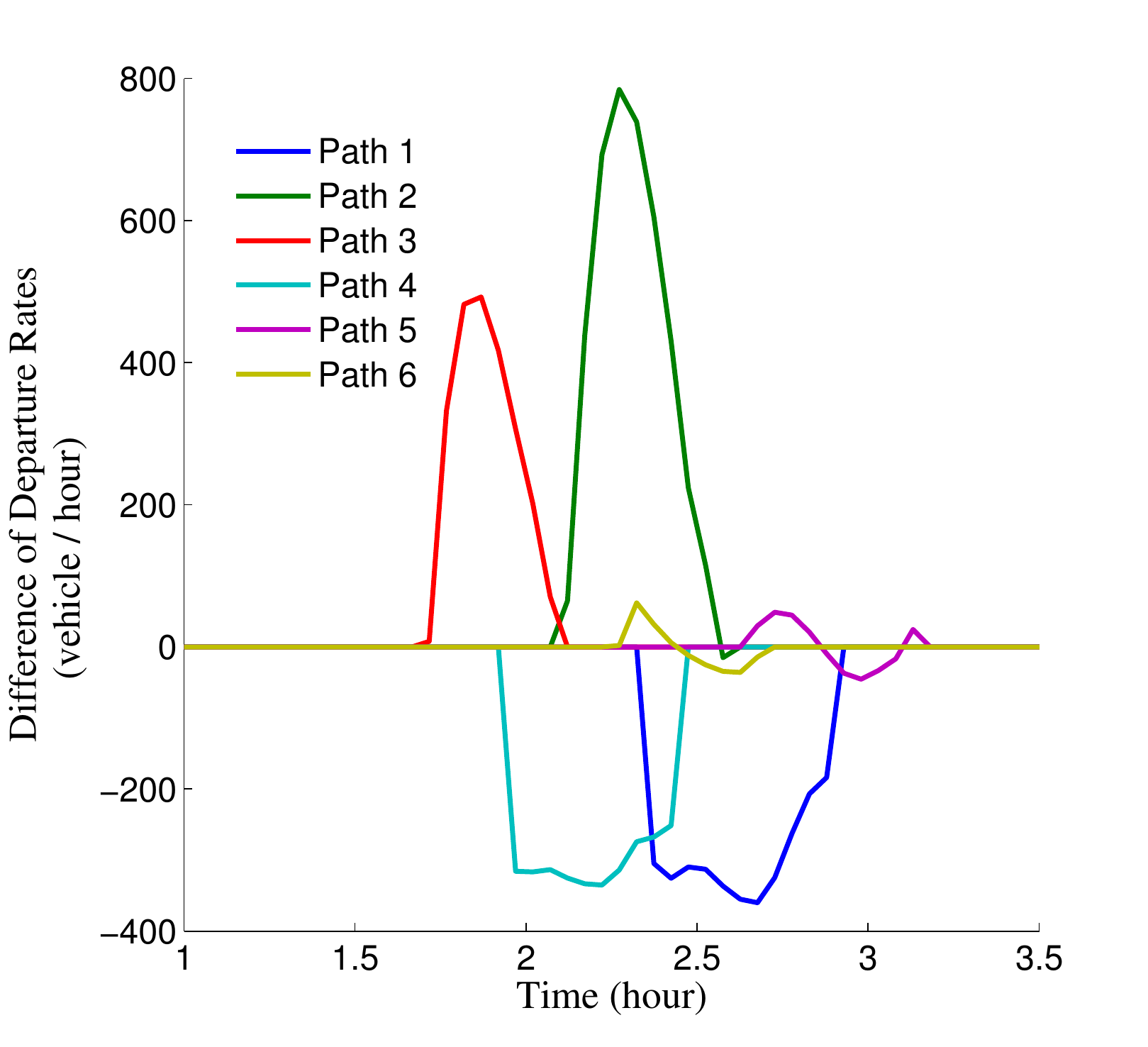}
\caption{Case I: differences in equilibrium path flows with and without toll.}
\label{figsmall_diff_departure}
\end{minipage}
\end{figure}

\begin{table}[h!]
\begin{center}
\begin{tabular}{|c|c|c|}
\hline
                                   & Total travel cost    &  Total emission     \\
\hline
DUE without toll       &   3.4744E+04        &      3.1789E+06  \\
\hline
DUE with toll            &  3.3723E+04     &  2.8483E+06         \\
\hline 
\end{tabular}
\end{center}
\caption{Case I: comparison of objective functions under equilibrium flow. }
\label{tabcase1}
\end{table}

\subsubsection{Case II}
In Case II, the travel demand between each O-D pair is significantly increased. The numerical solutions are shown in  Figure \ref{figlarge_due_departure}, \ref{figlarge_mpec_departure}, \ref{figlarge_toll} and \ref{figlarge_diff_departure}, which displays the same quantities as in Case I. Unlike the first case,  Case II shows only minor change of the DUE path flows with and without toll. We interpret such results with the following intuition: when the demand increases, the system becomes less sensitive to control parameters, making the system less controllable. This is also reflected from the comparison of objectives, as shown in Table \ref{tabcase2}. The reduction of total travel cost and total emission is only $0.04\%$ and $0.45\%$.

\begin{figure}[h!]
\begin{minipage}[b]{.49\textwidth}
\centering
\includegraphics[width=.8\textwidth]{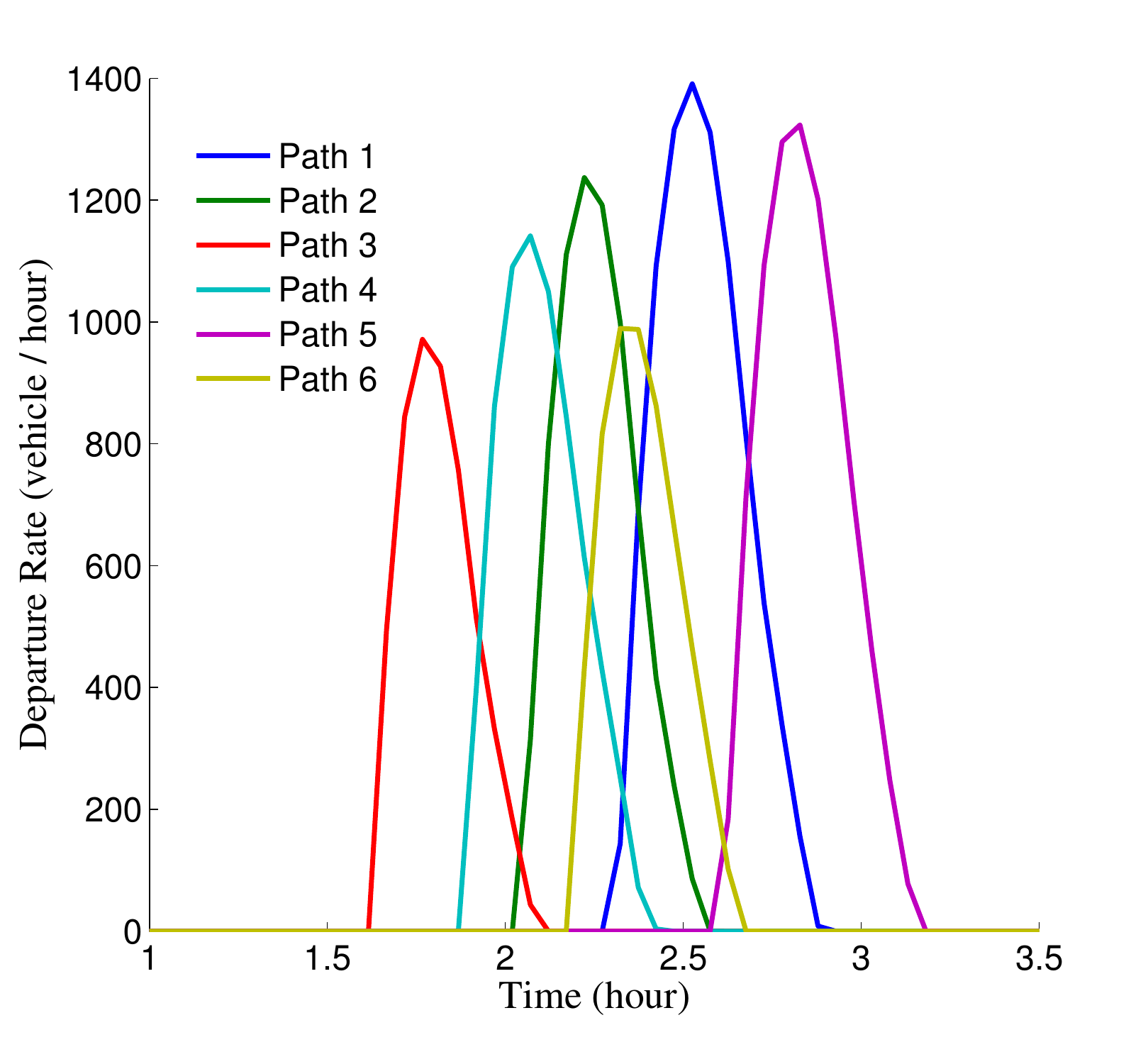}
\caption{Case II: DUE solution without any toll.}
\label{figlarge_due_departure}
\end{minipage}
\begin{minipage}[b]{.49\textwidth}
\centering
\includegraphics[width=.8\textwidth]{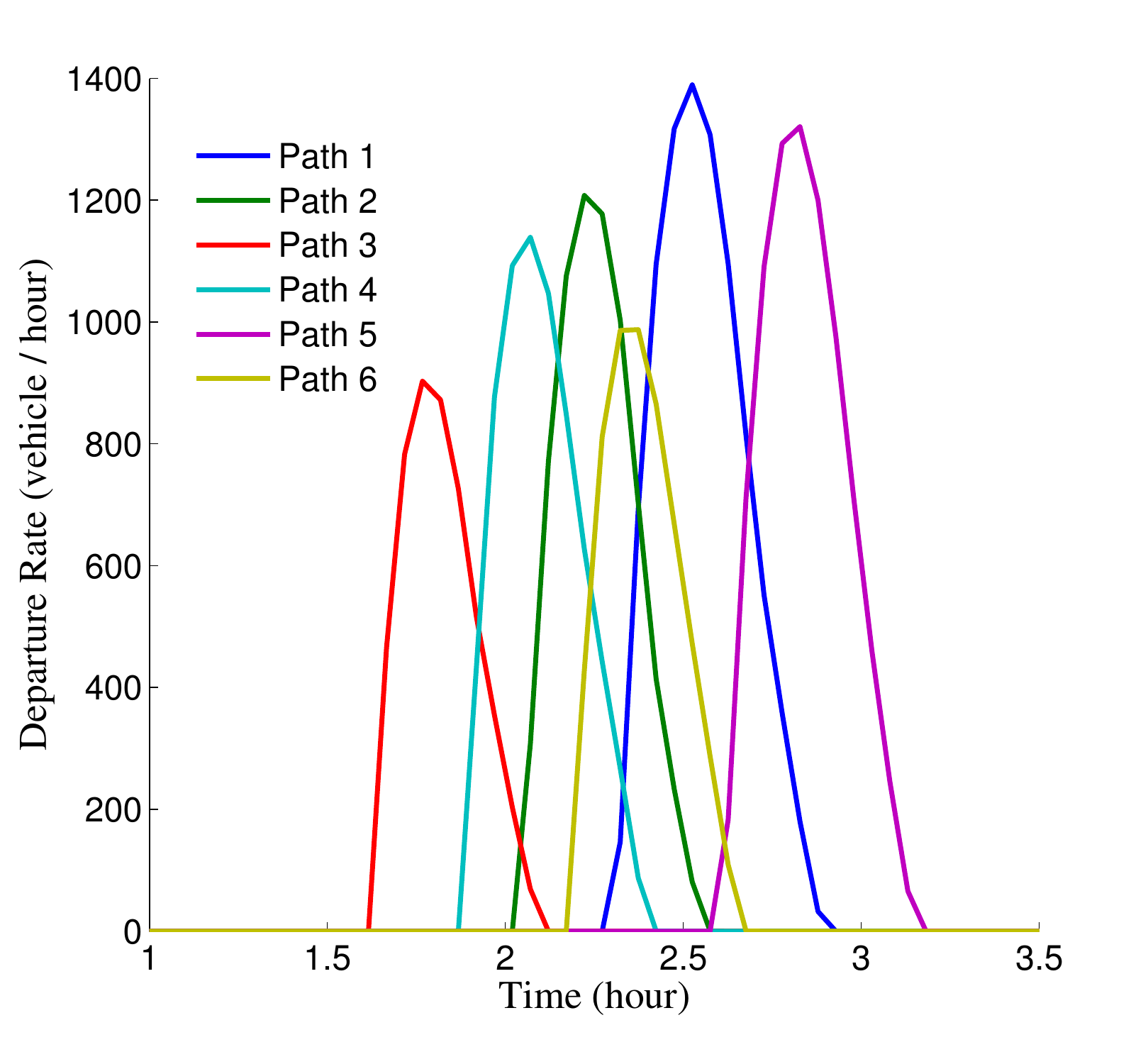}
\caption{Case II: DUE solution with optimal toll.}
\label{figlarge_mpec_departure}
\end{minipage}
\end{figure}

\begin{figure}[h!]
\begin{minipage}[b]{.49\textwidth}
\centering
\includegraphics[width=.8\textwidth]{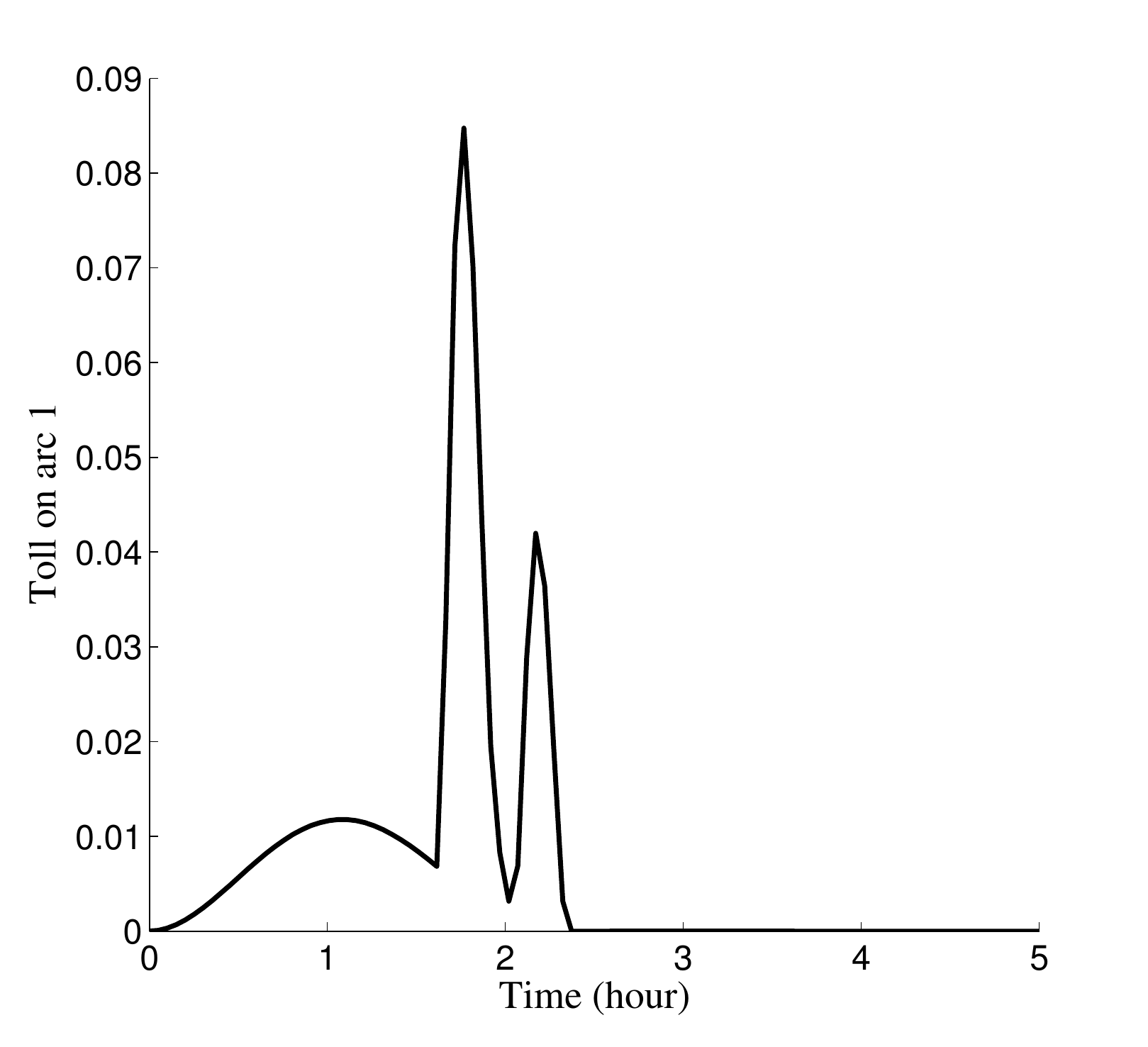}
\caption{Case II: optimal toll on arc 1.}
\label{figlarge_toll}
\end{minipage}
\begin{minipage}[b]{.49\textwidth}
\centering
\includegraphics[width=.8\textwidth]{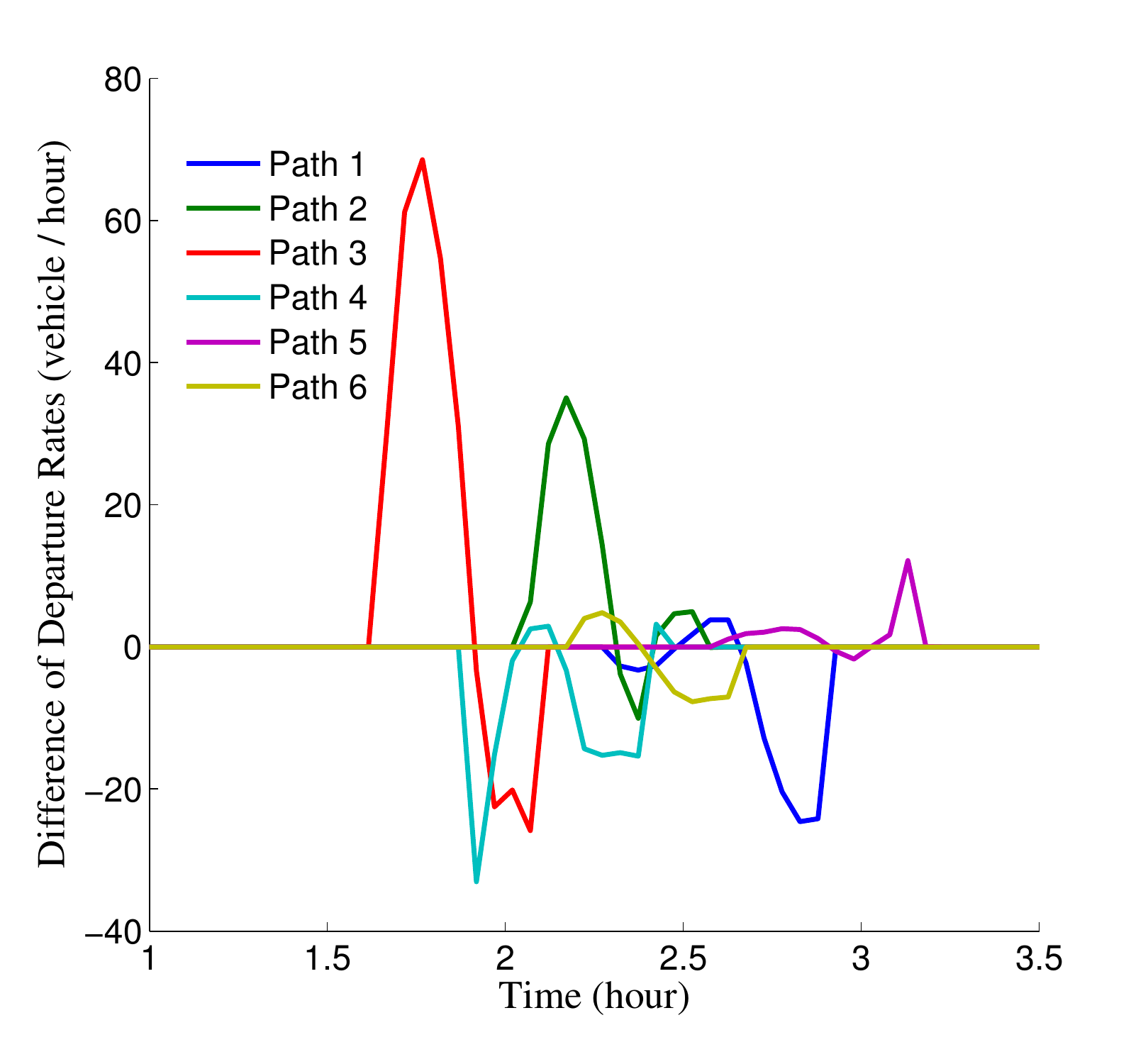}
\caption{Case II: differences of path flows between DUE without toll and DUE with toll.}
\label{figlarge_diff_departure}
\end{minipage}
\end{figure}

\begin{table}[h!]
\begin{center}
\begin{tabular}{|c|c|c|}
\hline
                                   & Total travel cost    &  Total emission     \\
\hline
DUE without toll      &   7.5962E+04         &   5.4119E+06    \\
\hline
DUE with toll            &   7.5932E+04         &   5.3878E+06         \\
\hline 
\end{tabular}
\end{center}
\caption{Case II: comparison of objective functions under equilibrium flow.}
\label{tabcase2}
\end{table}

\subsection{Different weights}
The multi-objective program is solved using the weighted sum scalarization method. We are interested to find out how the solution is affected by using different weights for the total effective delay and the total emission. Such test is conducted for both Case I and Case II,  with  results  summarized in Table \ref{tabdiffcase1} and Table \ref{tabdiffcase2}, respectively. We indicate by $\alpha$ the weight for the effective delay, and by $\beta$ the weight for the emission. 

\begin{table}[h]
\begin{center}
\begin{tabular}{|c|c|c|c|c|}
\hline
                               & Total travel cost    &  Total emission   & $\alpha$  & $\beta$    \\
\hline
Weight i,             &   3.3723E+04         &   2.8482E+06         &    0.0988   &  0.9011 \\
\hline
Weight ii,              &   3.3723E+04         &   2.8483E+06         &    0.9434   &  0.0566  \\
\hline 
\end{tabular}
\end{center}
\caption{Case I: comparison of objectives for different choices of weights. $\alpha$ is the weight of total travel cost, $\beta$ is the weight of total emission.}
\label{tabdiffcase1}
\end{table}

\begin{table}[h!]
\begin{center}
\begin{tabular}{|c|c|c|c|c|}
\hline
                               & Total travel cost    &  Total emission   & $\alpha$  & $\beta$    \\
\hline
Weight i,             &   7.5932E+04         &   5.3878E+06         &    0.0138   &  0.9862 \\
\hline
Weight ii,            &   7.8360E+04         &   5.2396E+06       &    0            &      1  \\
\hline 
Weight iii,           &   7.5858E+04         &   5.4175E+06       &     1         &     0  \\
\hline 
\end{tabular}
\end{center}
\caption{Case II: comparison of objectives for different choices of weights. $\alpha$ is the weight of total travel cost, $\beta$ is the weight of total emission.}
\label{tabdiffcase2}
\end{table}

\subsection{Discussions}
In the numerical example presented above, the tolling problem with multiple objectives on a toy network is solved. The results display certain interesting phenomena of the proposed model and provide insights to the sustainable management of road congestion in general.

As our first observation, a Braess-type paradox is created in Case I in Section \ref{seccase1}.  Namely,  the performance of the network, whether in terms of minimizing effective travel delay or in terms of minimizing emission, is enhanced in the more constrained system (the one with toll).  In general, the Braess paradox \citep{Braess}  tells us that, a network performance enhancement that is local (in space and, by implication, also in time) may produce global performance degradation. The Braess paradox is a phenomenon widely accepted as a fundamental feature of a large class of networks, namely those with noncooporative users and flow dependent costs (delays or latencies).  In our model, the travelers are assumed to be Nash agents who seek to minimize their own disutility. By imposing a nontrivial  toll on arc 1, affordable time windows on path \(p_2\) and path \(p_3\) become significantly
smaller. In particular, we notice that in the presence of the toll, path $p_3$ is
completely abandoned by the Nash agents.   With fewer affordable choices on path and/or departure time, the system capacity becomes more constrained.  However,  both the total travel cost and the emission amount are reduced; in other words,  a higher efficiency of the traffic network is attained in terms of transportation efficiency and environmental sustainability. To the best of our knowledge, this is the first observation of the Braess paradox in the context of environmental sustainability.

Second, tolls can act as effective stimuli in a transportation system.
We observe from Table 2 (in Case I) and Table 3 (in Case II) that by properly choosing
the toll prices, one can reduce both the total traffic cost and total emission. 
In Case I, the price of toll  on the first arc of path \(p_2\)  renders this
path no longer an affordable choice, i.e. the equilibrium flow on this path
vanishes, which, nonetheless,  creates a Braess-like paradox as discussed
above.  

Thirdly, the objectives of minimizing the total traffic cost and the total emission are neither completely conflicting  nor totally aligned with each other. We can tell from Table 3 that the Pareto optimal solutions can provide improvement on both criteria in comparison with the equilibrium state in anarchy as shown in Table 2 and 3, while we also observe in Table  5 that improving one objective might compromise  the other objective. Nonetheless, the latter observation is case-specific; as we compare Table 4 and 5, the tradeoffs between the objectives are less significant in  Table 4 than in Table 5.

Finally, by comparing Table 4 and Table 5, in response to changing weights
for the two objectives in our weighted sum approach, Case I (in Table 4)
shows  very minor changes in objective values compared to Case II (in Table
5). The reason for such a difference in the sensitivity to weight perturbations is still of our
research curiosity. Nonetheless, this points to a necessity of a careful
determination of weights for the two objectives in our model in order to attain the optimization goal of the central control.

\section{Conclusion}
This paper proposes a congestion pricing problem that takes into account the environmental impact of traffic dynamics on a vehicular network. The optimal tolling problem is formulated as a bi-level problem where the upper level decision maker (central authorigy) seek to simultaneously minimize both congestion and vehicle-driven environmental deterioration, while the lower-level decision makers (travelers)  engage in a Nash-like game by selfishly minimizing their own travel delay and/or arrival penalties. The lower-level model is a dynamic user equilibrium (DUE) which is expressed as a differential variational inequality. A mathematical program with equilibrium constraints (MPEC) formulation of the bi-level problem is presented, which is then reformulated as a mathematical program with complementarity constraints (MPCC). In order to avoid violation of constraint qualifications, we  apply a quadratic penalty-based method to the MPCC.  The relaxed program is solved with the  gradient projection algorithm presented in \cite{DODG}, with the two objectives handled via a weighted-sum scalarization.

The lower-level DUE problem is solved with a fixed-point algorithm in Hilbert space \citep{FrieszMookherjee, Friesz2011, Friesz2013}. Such an algorithm requires constant evaluation of effective path delays with established path flows, which is recognized as the dynamic network loading (DNL) procedure. By nature of our proposed model, an emission estimation procedure needs to be embedded in the DNL subproblem; this is done in this paper by employing a speed-related emission function in \cite{emfac} and by integrating such function with path delays produced by the DNL procedure.

The numerical example demonstrates the effectiveness of congestion toll in controlling and reducing both total travel cost and emission. We also report a Braess-type paradox where a more constrained system results in higher transportation efficiency and less environmental deterioration.

\cite{Wismans} employ genetic algorithms and response surface methods to approximate the whole set of the Pareto frontier and used pruning method to facilitate decision making. Also of our research curiosity is the generation of the Pareto frontier  such that the trade-offs between the two objective functions can be further investigated. \cite{KV} point out that a weighted sum scalarization approach could obtain only convex part of the Pareto frontier, which might lead
to selecting suboptimal trajectories. To resolve such an issue, they provide
an alternative `marching' method. The application of their approach to our
model is also of our future research interest.












\end{document}